\newtheorem{theorem}{Theorem}[section]
\newtheorem{corollary}[theorem]{Corollary}
\newtheorem{lemma}[theorem]{Lemma}
\newtheorem{proposition}[theorem]{Proposition}
\theoremstyle{definition}
\newtheorem{definition}[theorem]{Definition}
\newtheorem{remark}[theorem]{Remark}
\newtheorem{assumption}[theorem]{Assumption}
\newtheorem{algo}[theorem]{Pricing Scheme}
\numberwithin{equation}{section}
\title[Dynamic pricing under nested logit demand]{Dynamic pricing under nested logit demand}
\author[D. M\"uller]{David M\"uller}
\address[D. M\"uller]{Department of Mathematics, Chemnitz University of Technology, Reichenhainer Str. 41, 09126
Chemnitz, Germany}
\email{{\tt david.mueller@mathematik.tu-chemnitz.de}}
\author[Yu. Nesterov]{Yurii Nesterov}
\address[Yu. Netsterov]{Center for Operations Research and Econometrics (CORE),
Catholic University of Louvain (UCL), 34 voie du Roman
Pays, 1348 Louvain-la-Neuve, Belgium}
\email{\tt yurii.nesterov@uclouvain.be}
\author[V. Shikhman]{Vladimir Shikhman}
\address[V. Shikhman]{Department of Mathematics, Chemnitz University of Technology, Reichenhainer Str. 41, 09126
Chemnitz, Germany}
\email{\tt vladimir.shikhman@mathematik.tu-chemnitz.de}
\keywords{dynamic pricing, discrete choice, nested logit, total expected revenue function, smoothing}
\subjclass[2010]{90C25, 91B24}
\begin{document}

\begin{abstract}
Recently, there is growing interest and need for dynamic pricing algorithms, especially, in the field of online marketplaces by offering smart pricing options for big online stores. We present an approach to adjust prices based on the observed online market data.  The key idea is to characterize optimal prices as minimizers of a total expected revenue function, which turns out to be convex. We assume that consumers face information processing costs, hence, follow a discrete choice demand model, and suppliers are equipped with quantity adjustment costs. We prove the strong smoothness of the total expected revenue function by deriving the strong convexity modulus of its dual. Our gradient-based pricing schemes outbalance supply and demand at the convergence rates of $\mathcal{O}(\frac{1}{t})$ and $\mathcal{O}(\frac{1}{t^2})$, respectively. This suggests that the imperfect behavior of consumers and suppliers helps to stabilize the market.
\end{abstract}

\maketitle


\section{Introduction}
  In the last years, the interest for {\it dynamic pricing} has been growing,  in particular, due to the enormous success of online marketplaces.
%
%
  Proper pricing algorithms become important whenever an intermediary  has to match demand and supply on a market by setting a suitable price. Such situations arise amongst others in financial services or online marketplaces\footnote{ E.\,g., the accommodation sharing platform Airbnb offers the Smart Price option to hosts \cite{Airbnb}. Other online marketplaces are facing similar challenges. Another known and studied application of dynamic pricing is settled in the area of financial services, see e.\,g. \cite{bayesian} and \cite{keskin}, where consumers' demand loans or credits and the bank has to set the price, i.\,e. interest rate for each consumer. The matching of demand and supply for financial products are often done by financial intermediaries.}. 
Classical learning algorithms for dynamic pricing aim to learn the value of each feature. After having learned these values, one could choose the proper price. 

Instead of considering the feature-based learning, we focus in this paper on the notion of equilibrium prices. The equilibrium prices are characterized via a {\it total expected revenue function}, similar to the total excessive revenue function  from \cite{nesterov2017distributed}. By using convex analysis, we describe a way to set prices which clear the market, i.\,e. they balance supply and demand.  Hence, our approach can be settled in the field of dynamic pricing, as it gives a possibility to learn directly from the observed market data. An intermediary constructs a sequence of prices, which converges to the equilibrium. This enables an efficient trade without any deadweight loss at least in the limit. 

As a main novelty in this paper, we introduce market participants with {\it imperfect behavior}. On the demand side, consumers make random errors while choosing between differentiated alternatives. This makes it harder to predict their decisions. Suppliers are assumed to be inflexible, as they face quantity adjustment costs. By our suggested pricing schemes, a pricing agent can find equilibrium prices with a rate of $\mathcal{O}(1/t)$, respectively $\mathcal{O}(1/t^2)$ for an accelerated version. From the economical perspective this is an astonishing result. Compared to the algorithmic equilibrium model without random errors and quantity adjustment costs in \cite{nesterov2017distributed} with the rate $\mathcal{O}(1/\sqrt{t})$, we have an improvement by an order of magnitude. Our main result suggests that the imperfect behavior of market participants contributes to the stability of markets.

 Let us briefly refer to the related approaches for agents' pricing of differentiated goods under discrete choice demand. This has been studied in the context of 
 the {\it oligopolistic price competition}. Here, suppliers maximize their expected profits by setting prices and facing the discrete choice demand of consumers. We recommend \cite{palma} for a basic overview. A central challenge of this approach is to induce analytical properties of the suppliers' profit function. Concavity of the expected revenue under the multinomial logit has been shown in \cite{dong2009dynamic}, where the authors assume identical price sensitivities. This assumption is relaxed in \cite{li2011pricing}. Furthermore, the authors prove concavity for the nested logit expected revenue functions, if the price sensitivities are equal for alternatives within the same nest. The concavity of the expected revenue functions is with respect to the market shares. Note that the multinomial logit expected revenue function is not concave with respect to prices \cite{hanson1996optimizing}.  
 
 The price competition \`a-la Bertrand is modeled classically by a {\it Nash game}. There are several results concerning existence and uniqueness of the corresponding Nash equilibria. For linear random utility models with an $-1/(n+1)$-concave density function and each of the $n$ firms offering exactly one product, a price equilibrium exists \cite{palma}. 
  In the symmetric case where additionally all observable utilities of alternative are equal, the result can be strengthened to a unique Nash equilibrium \cite{palma}. With slightly more assumptions, the same authors show the existence of a subgame perfect Nash equilibrium under nested logit demand. In \cite{li2011pricing} the multiproduct pricing problem is reduced to finding the root of a single valued equation. The equilibrium price of such an oligopolistic competition is determined by using a modified Lambert W function. The idea of dimensionality reduction can be found in \cite{gallego2014multiproduct}, where also the existence of a unique Nash equilibrium is proven. 
  
  Our approach provides a way to efficiently determine equilibrium prices beyond the game-theoretic approach. 
  The market participants are {\it price takers}, and not price setters as in the oligipolistic environment. This allows us to use the {\it convex potential} for the price adjustment. Namely, in order to achieve equilibrium prices, the total expected revenue has to be minimized with respect to prices. { One may wonder: Is the principal who adjusts prices an idealist, purely
instrumental, akin to the Walrasian auctioneer? Although this interpretation is well possible, in this paper the principal is understood as a market operator who is actually authorized and financed by market participants.}  

Our notation is standard. By $\mathbb{R}^n $ we denote the space of n-dimensional vectors, where  the vectors $x = \left(x^{(1)}, \ldots, x^{(n)}\right)^T $ are column vectors.  We write $\mathbb{R}^n_+ $ for the set of vectors with nonnegative components. If not stated otherwise, inequalities for vectors are meant componentwise.
We write $e_{n} \in \mathbb{R}^n$ for the $n$-dimensional vector of all ones.
We introduce the standard inner product in $\mathbb{R}^n$:
\[
\left\langle x,y\right\rangle = \sum\limits_{i=1}^{n} x^{(i)} y^{(i)}. 
\]
 For $x \in \mathbb{R}^n$, we use the norms
\[
\|x\|_1 = \sum\limits_{i=1}^{n} |x^{(i)}|, \quad 
\|x\|_2 = \sqrt{\sum\limits_{i=1}^{n} \left(x^{(i)}\right)^2}, \quad
\|x\|_\infty = \underset{1 \le i \le n}{\max} |x^{(i)}| . 
\] 
For a real number $x$, we denote by $x_+=\max\{x,0\}$ its positive part.
Given a function $f$, we denote its  domain by  $$\mbox{dom} f = \{x \in \mathbb{R}^n \, | \,f(x) < \infty\}.$$ Further, we recall the definition of the convex conjugate of a convex function $f$: \[f^\star(s) = \underset{x \in \mathbb{R}^n}{\sup}\left\langle x,s \right\rangle - f(x), \] where $s$ is a vector of dual variables. Finally, for the $(n-1)$-dimensional simplex we write   \[
\triangle = \left\{q \in \mathbb{R}^n \,\left|\, \sum\limits_{i=1}^{n} q^{(i)} = 1,  q^{(i)} \ge 0, i=1, \ldots, n \right. \right\}. 
\]
 
\section{Discrete choice model}  

 We present the consumer behavior given by additive random utility models. The additive decomposition of utility goes back to psychological experiments accomplished in the 1920's \cite{thurstone}. A formal description of this framework has been first introduced in economic context \cite{gev}, where rational decision-makers choose from a finite set of mutually exclusive alternatives $I=\{1, \ldots, n\}. $ Although the decision rule follows a rational behavior, agents are prone to random errors. The latter describe decision-affecting features which cannot be observable. Each alternative $i \in I$ provides the utility \[v^{(i)} + \epsilon^{(i)}, \] where $ v^{(i)} \in \mathbb{R} $ is the deterministic utility part of the $i$-th alternative and $\epsilon^{(i)} $ is its stochastic error.  We use the following notation for the vectors of deterministic utilities and of random utilities, respectively:
\[
v = \left(v^{(1)}, \ldots, v^{(n)}\right)^T, \quad \epsilon = \left(\epsilon^{(1)}, \ldots, \epsilon^{(n)}\right)^T. 
\] 
As already mentioned, the consumers behave rationally, meaning they maximize utility. Hence, their corresponding surplus is given by the expected maximum utility
\begin{equation}\label{consumer surplus general}
E(v) = \mathbb{E}_\epsilon  \left(\max_{1 \leq i \leq n} v^{(i)} + \epsilon^{(i)}\right).   
\end{equation}
{
Let us briefly give an interpretation of (\ref{consumer surplus general}). Usually, one imagines a researcher who is examining the choice. The consumer's choice depends on many factors, some of them beeing observed by the researcher and some of them not. The utility that the consumer obtains from choosing an alternative is decomposed into a part $v$, that depends on variables that the researcher observes, and a part $\varepsilon$ that depends on variables that the researcher does not observe. In this paper, we follow the {\it rational inattention} interpretation as advocated in \cite{sims2010}. According to the latter, the consumer's choice is intrinsically prone to errors, e.\,g. since the number of goods is too large or they sometimes become tired of the goods' comparison, see also \cite{shum}.}

 Next, we review some important properties of the surplus function E. It is convex and differentiable \cite{palma}. The well-known Williams-Daly-Zachary theorem  states that the gradient of E corresponds to the vector of choice probabilities \cite{gev}, i.\,e. each component gives  the probability that alternative $i $ provides the maximum utility among all alternatives. 
 To see this, let us denote the choice probabilities by 
 \[
  \mathbb{P}^{(i)} = \mathbb{P} \left( v^{(i)} +  \epsilon^{(i)} = \underset{1 \leq i \leq n}{\max} v^{(i)} + \epsilon^{(i)} \right).
 \]
{ Then, the expected maximum utility can be equivalently written as
 \[
    E(v) = \sum_{i=1}^{n} \mathbb{P}^{(i)} \cdot  \mathbb{E}_\epsilon  \left(v^{(i)} + \epsilon^{(i)}\right). 
 \]
 }
 From here we get in terms of partial derivatives of $E$:  \begin{equation}\label{dalytheorem}
\frac{\partial E(v)}{\partial v^{(i)}} = \mathbb{P}^{(i)}.
\end{equation}
{ The formula (\ref{dalytheorem}) holds if we assume that no ties will ever occur in (\ref{consumer surplus general}). In this case, the probability of two alternatives to simultaneously provide the maximum utility becomes zero. 
The latter is, in particular, implied by a stronger assumption widely used in the literature that the random vector $\epsilon $ follows a joint distribution which is absolutely continuous with respect to the Lebesgue measure, see e.\,g. \cite{palma}.} 

Let us specify the discrete choice demand in detail. For our model, we concentrate on random utility errors which follow the nested logit distribution from \cite{gev} given by the probability density function  
\begin{equation}\label{nl density}
f_\epsilon\left(z\right) = \exp\left(-\sum\limits_{\ell=1}^{L} \left(\sum\limits_{i \in N_\ell}^{} e^{-z^{(i)} /\mu_\ell}\right)^{\mu_\ell}\right),
\end{equation}
where $z=\left(z^{(1)},\ldots,z^{(n)}\right)^T \in \mathbb{R}^n$. Here, every alternative $i$ belongs to exactly one nest $N_\ell \subset \{1, \ldots, n\}$ for $\ell = 1, \ldots, L$. Compared to the well-known multinomial logit model with just one nest, the nested logit is more appropriate to model differentiated products. Nested logit allows in particular the violation of the axiom of irrelevance of independent alternatives, see e.\,g. \cite{palma}.  
The consumer surplus \eqref{consumer surplus general} is then \begin{equation}\label{nested logit surplus}
E(v) = \ln\left(\sum\limits_{l=1}^{L} \left(\sum\limits_{i \in N_\ell}^{} e^{v^{(i)} /\mu_\ell}\right)^{\mu_\ell}\right).  
\end{equation} 
The corresponding choice probabilities of an alternative $i \in N_\ell $ can be derived by using \eqref{dalytheorem}, see also \cite{shum}:
\[
\mathbb{P}^{(i)} = \frac{e^{v{(i)}/\mu_\ell} \left(\sum\limits_{j \in N_\ell}^{}e^{v^{(j)}/\mu_\ell}\right)^{\mu_\ell -1}}{\sum\limits_{k=1}^{L} \left(\sum\limits_{j \in N_k}^{} e^{v^{(j)} /\mu_k}\right)^{\mu_k}}.
\] 
Note that the nested logit distribution fulfills the assumption on ties from above.
Equivalently, the choice probabilities can be written as
\[
   \mathbb{P}^{(i)}= 
   \frac{e^{\mu_\ell \ln \sum_{j \in N_\ell} e^{\nicefrac{v^{(j)}}{\mu_\ell}}}}{\displaystyle
     \sum_{k=1}^{L} e^{\mu_k \ln \sum_{j \in N_k} e^{\nicefrac{v^{(j)}}{\mu_k}}}}\cdot   \frac{e^{\nicefrac{v^{(i)}}{\mu_\ell}}}{\displaystyle
     \sum_{j\in N_\ell} e^{\nicefrac{v^{(j)}}{\mu_\ell}}},
\]
where the term 
\[
\mu_\ell \ln \sum_{j \in N_\ell} e^{\nicefrac{v^{(j)}}{\mu_\ell}}
\]
can be interpreted as the inclusive value of the alternatives within the  nest $N_\ell$. 

We comment on the nest specific parameters $\mu_\ell$, $\ell=1, \ldots, L$.
For the sake of completeness the proof of the following Proposition \ref{prop:corr} can be found in Appendix.

\begin{proposition}[Nest parameters as correlations, \cite{ben1973structure}]
\label{prop:corr}
The correlation of errors of different alternatives within the same $\ell$-th nest is $1-\mu_\ell^2$. The errors of alternatives from different nests are uncorrelated.
\end{proposition}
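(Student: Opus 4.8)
The plan is to exploit that a correlation depends only on pairwise moments, so the full $n$-dimensional law (\ref{nl density}) can be replaced by its bivariate marginals. Reading (\ref{nl density}) as the joint cumulative distribution function $F_\epsilon$ of $\epsilon$, I would first obtain the one-dimensional marginal of $\epsilon^{(i)}$ with $i \in N_\ell$ by letting every other coordinate tend to $+\infty$: all exponentials $e^{-z^{(j)}/\mu_k}$ with $j \neq i$ vanish, the inner sum of nest $\ell$ collapses to $\left(e^{-z^{(i)}/\mu_\ell}\right)^{\mu_\ell} = e^{-z^{(i)}}$, and every other nest contributes $0$. Hence the marginal is the standard Gumbel law $\exp(-e^{-z^{(i)}})$, so each $\epsilon^{(i)}$ has mean equal to the Euler--Mascheroni constant $\gamma$ and variance $\pi^2/6$; this common variance is the denominator in every correlation and is independent of the nest.

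Next I would split into the two cases. For $i \in N_\ell$ and $j \in N_{\ell'}$ with $\ell \neq \ell'$, sending all remaining coordinates to $+\infty$ leaves exactly one surviving term in each of the two nests, giving the bivariate distribution function $\exp\left(-e^{-z^{(i)}} - e^{-z^{(j)}}\right)$, which factorizes into the product of the two Gumbel marginals. Thus $\epsilon^{(i)}$ and $\epsilon^{(j)}$ are independent, hence uncorrelated, which settles the second claim. For $i, i' \in N_\ell$ in the same nest, the same marginalization retains both terms of nest $\ell$ and yields the bivariate Gumbel logistic distribution function
\[
F(x,y) = \exp\left(-\left(e^{-x/\mu_\ell} + e^{-y/\mu_\ell}\right)^{\mu_\ell}\right).
\]

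It remains to compute the correlation of this bivariate law and to verify that it equals $1 - \mu_\ell^2$. I would differentiate $F$ twice to obtain its joint density, then evaluate the covariance $\mathbb{E}\!\left[\epsilon^{(i)}\epsilon^{(i')}\right] - \gamma^2$ through the substitution $u = e^{-x/\mu_\ell}$, $v = e^{-y/\mu_\ell}$, followed by $u = tw$, $v = t(1-w)$ with $t > 0$ and $w \in (0,1)$. This separates the integral into Gamma-type factors in $t$, reducible to values of the digamma and trigamma functions, and Beta-type factors in $w$ such as $\int_0^1 \ln w \, \ln(1-w)\,dw$. The main obstacle is precisely the bookkeeping of these special-function evaluations: one must see that all Euler-constant contributions cancel and that the surviving terms collapse, via $\zeta(2) = \pi^2/6$, to $\mathrm{Cov} = (1-\mu_\ell^2)\,\pi^2/6$, so that dividing by the variance $\pi^2/6$ leaves the correlation $1 - \mu_\ell^2$. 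Two sanity checks support this target value: $\mu_\ell = 1$, where the nest degenerates to the multinomial logit, gives correlation $0$, whereas $\mu_\ell \to 0$ gives correlation $1$, matching perfect within-nest dependence. Alternatively, this final computation can be shortened by invoking the classical formula for the correlation of the logistic bivariate extreme-value model.
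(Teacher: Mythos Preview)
Your proposal is correct but follows a genuinely different route from the paper. The paper's argument is structural: it starts from the product form of the choice probability $\mathbb{P}^{(i)} = \mathbb{P}^{(i \mid N_\ell)} \cdot \mathbb{P}^{(N_\ell)}$ and reads off from it an additive decomposition of the error $\epsilon^{(i)} = \epsilon_{N_\ell}^{(i)} + \epsilon^{(N_\ell)}$ into an independent alternative-specific Gumbel part with scale $\mu_\ell$ (variance $\mu_\ell^2 \pi^2/6$) and a nest-specific part. The total variance $\pi^2/6$ then forces $\mathrm{Var}(\epsilon^{(N_\ell)}) = (1-\mu_\ell^2)\pi^2/6$, and the within-nest covariance equals the variance of the shared nest component, yielding the correlation $1-\mu_\ell^2$ in two lines with no integration at all. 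Your approach instead works directly with the joint law: you extract the bivariate marginals from (\ref{nl density}) (which, as you correctly note, is in fact the CDF rather than a density), obtain independence across nests by factorization, and for the within-nest pair reduce to the classical logistic bivariate extreme-value correlation, either by the explicit integral or by citation. What you gain is rigor---the paper's additive error decomposition is a well-known heuristic, since the Gumbel family is not closed under convolution and no exact such representation exists---so your direct calculation is the more honest derivation. What the paper's argument buys is brevity and a transparent structural explanation of \emph{why} the nest parameter controls the correlation, which your special-function bookkeeping tends to obscure.
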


\begin{remark}
	For the analysis of nested logit the condition $0 < \mu_\ell \le 1$ for $\ell=1, \ldots, L$ is usually assumed. Proposition \ref{prop:corr} is in accordance with this. Indeed, the alternatives in the same nest are correlated, while the correlation between the nests vanishes. The nested logit model only allows for nonnegative correlations, i.\,e. $1-\mu_\ell^2 \ge 0$,  $\ell=1, \ldots, L$. The latter is obviously equivalent to $\mu_\ell \le 1$, $\ell=1, \ldots, L$.   \qed
\end{remark}

We are interested in strong smoothness of the surplus function $E$. 
\begin{definition}[Strong smoothness of $E$] 
\label{strong smoothness}
	The surplus function $E: \mathbb{R}^n \to \mathbb{R} $ is $B$-strongly smooth with respect to the maximum norm $\|\cdot\|_\infty $ if for all $v, \bar v \in \mathbb{R}^n $ we have: 
	\[
	\|\nabla E(v) - \nabla E(\bar v)\|_1 \le  B ||v - \bar v||_\infty.
	\]
	The smallest constant $B \ge 0$ with this property is called the modulus of smoothness of $E$. 
\end{definition}

In what follows, we use a conjugate duality relation between strong smoothness of $E$ and strong convexity of its conjugate $E^\star$. Let us recall the definition of a strongly convex function.

\begin{definition}[Strong convexity of $E^\star$] 
\label{strong convexity}
The convex conjugate $E^\star: \triangle \to \mathbb{R}$ of the surplus function is $\beta$-strongly convex with respect to $\|\cdot\|_1 $ norm if for all $q, \bar{q} \in  \triangle$ and $\lambda \in [0,1] $ we have: 
\[
E^\star(\lambda q + (1- \lambda)\bar{q}) \le \lambda E^\star(q) + (1-\lambda) E^\star(\bar{q}) - \frac{\beta}{2}\lambda (1-\lambda) ||q-\bar{q}||_1^2.
\]
The biggest constant $\beta > 0 $ with the above property is called the modulus of strong convexity of $E^\star$.
\end{definition}

The convex conjugate of $E$ is explicitly given in \cite{shum}:
\[
E^\star(q) = \sum\limits_{\ell =1}^{L}\mu_\ell\sum\limits_{i \in N_\ell}^{} q^{(i)} \ln q^{(i)} + \sum\limits_{\ell =1}^{L} (1 - \mu_\ell) \left(\sum\limits_{i \in N_\ell}^{} q^{(i)}\right) \ln \left(\sum\limits_{i \in N_\ell}^{} q^{(i)}\right). 
\] 
It has an interpretation of the generalized entropy.

\begin{lemma}[Strong convexity of $E^\star$]
\label{lem:sc.of.nl.conjugate}
	The modulus of strong convexity of $E^\star$ with respect to $\|\cdot\|_1 $ norm is $\beta = \underset{1\leq \ell \leq L}{\min} \; \mu_\ell$. 
\end{lemma}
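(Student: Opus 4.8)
The plan is to translate the three-point inequality of Definition \ref{strong convexity} into a second-order (Hessian) estimate on the relative interior of $\triangle$, and then to check that the resulting estimate is both valid and sharp with the constant $\beta=\min_{\ell}\mu_\ell$. I would rely on the standard equivalence, valid for an arbitrary norm, that a twice continuously differentiable function satisfies the three-point inequality with modulus $\beta$ with respect to $\|\cdot\|_1$ if and only if $\langle \nabla^2 E^\star(q)h,h\rangle\ge\beta\|h\|_1^2$ holds for every $q$ in the relative interior of $\triangle$ and every feasible direction $h$, that is, every $h$ with $\sum_{i=1}^{n}h^{(i)}=0$; this follows from a Taylor expansion with integral remainder, where $\|y-x\|_1^2$ factors out. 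Since $E^\star$ is continuous on the compact simplex (with $0\ln 0=0$) but differentiable only on its relative interior, I would first prove the estimate there and then extend the three-point inequality to all of $\triangle$ by continuity of both sides.

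Next I would differentiate the explicit formula for $E^\star$. Writing $Q_\ell=\sum_{i\in N_\ell}q^{(i)}$, the first sum yields a diagonal Hessian part with entries $\mu_{\ell(i)}/q^{(i)}$, while the second sum yields, within each nest, a rank-one block equal to $\tfrac{1-\mu_\ell}{Q_\ell}$ times the all-ones matrix. Hence, for any direction $h$,
\[
\langle \nabla^2 E^\star(q)h,h\rangle=\sum_{\ell=1}^{L}\mu_\ell\sum_{i\in N_\ell}\frac{(h^{(i)})^2}{q^{(i)}}+\sum_{\ell=1}^{L}\frac{1-\mu_\ell}{Q_\ell}\Bigl(\sum_{i\in N_\ell}h^{(i)}\Bigr)^2.
\]
For the lower bound I would discard the second sum, which is nonnegative because $0<\mu_\ell\le 1$, bound each $\mu_\ell$ from below by $\beta=\min_\ell\mu_\ell$, and apply Cauchy--Schwarz with weights $q^{(i)}$: using $\sum_{i=1}^{n}q^{(i)}=1$,
\[
\|h\|_1^2=\Bigl(\sum_{i=1}^{n}\frac{|h^{(i)}|}{\sqrt{q^{(i)}}}\,\sqrt{q^{(i)}}\Bigr)^2\le\sum_{i=1}^{n}\frac{(h^{(i)})^2}{q^{(i)}}.
\]
This gives $\langle \nabla^2 E^\star(q)h,h\rangle\ge\beta\|h\|_1^2$, so $E^\star$ is $\beta$-strongly convex.

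Finally, for sharpness I would exhibit a direction along which the estimate is asymptotically tight. Choosing a nest $N_{\ell^*}$ attaining the minimum $\mu_{\ell^*}=\beta$ and two alternatives $i,j\in N_{\ell^*}$, I set $h^{(i)}=1$, $h^{(j)}=-1$, and $h^{(k)}=0$ for $k\neq i,j$. Then $\sum_{k\in N_{\ell^*}}h^{(k)}=0$, so every term of the second sum vanishes and
\[
\langle \nabla^2 E^\star(q)h,h\rangle=\mu_{\ell^*}\Bigl(\frac{1}{q^{(i)}}+\frac{1}{q^{(j)}}\Bigr),\qquad \|h\|_1^2=4.
\]
Driving $q^{(i)},q^{(j)}\to\tfrac12$ through the relative interior makes the quotient tend to $\mu_{\ell^*}=\beta$, so no modulus larger than $\beta$ can be valid. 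Combining the two bounds identifies the modulus as exactly $\beta$.

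I expect the main obstacle to be the boundary behavior of $E^\star$: it is not differentiable where some $q^{(i)}=0$, so the whole Hessian argument lives on the relative interior and must be transported to $\triangle$ by the continuity/closure step mentioned above, while the extremal configuration in the sharpness argument has to be approached from inside. A secondary point worth flagging is that the tightness direction requires the minimizing nest to contain at least two alternatives; this holds for any genuine nest, whereas a singleton nest effectively carries the combined weight $\mu_\ell+(1-\mu_\ell)=1$ on its coordinate and therefore does not lower the modulus.
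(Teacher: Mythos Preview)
Your lower-bound argument is essentially the paper's: both isolate the diagonal Hessian contribution $\sum_{\ell}\mu_\ell\sum_{i\in N_\ell}(h^{(i)})^2/q^{(i)}$, discard the nonnegative $(1-\mu_\ell)$ block terms, bound $\mu_\ell\ge\beta$, and finish with the same Cauchy--Schwarz step exploiting $\sum_i q^{(i)}=1$. The paper organizes this as $E^\star=f+g$ with $f$ shown $\beta$-strongly convex via its diagonal Hessian and $g$ merely convex, but the content is identical to yours.

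Where you go beyond the paper is in supplying a sharpness argument: the paper's proof establishes only that $\beta$ is \emph{a} valid constant, not that it is the largest one, despite the wording of the lemma. You are also more careful than the paper about the boundary of $\triangle$ (restricting to the relative interior and extending by continuity) and about the singleton-nest edge case, neither of which the paper addresses.
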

\begin{proof}
	We begin by examining the first part of the formula for $E^\star$, which we denote for simplicity by
	\[
	f(q) = \sum\limits_{\ell =1}^{L}\mu_\ell\sum\limits_{i \in N_\ell}^{} q^{(i)} \ln q^{(i)}.
	\]
	Basic calculus gives its Hessian with the entries 
	\[ 
	\nabla_{ii}^2f(q) = \frac{\mu_\ell}{q^{(i)}} \quad \text{for all} \; i \in N_\ell,  \quad \nabla_{ij}^2f(q) = 0 \quad  \text{for all} \; j \neq i. \] Consequently, the Hessian $\nabla^2 f(q) $ is a diagonal matrix. The second order criterion for strong convexity with respect to an arbitrary norm $\|\cdot\|$ is given in \cite{nesterovbook}: 
	\[
	\left\langle \nabla^2f(q)h,h \right\rangle \ge \beta \|h\|^2 \quad \mbox{for all } \; h \in \mathbb{R}^n. 
	\]
Applying this criterion in our case provides
	\begin{align*}
	\left\langle \nabla^2f(q)h,h\right\rangle &= \sum\limits_{\ell =1}^{L}\mu_\ell\sum\limits_{i \in N_\ell}^{} \frac{\left(h^{(i)}\right)^2 }{q^{(i)}} \ge {\beta} \sum\limits_{i=1}^{n} \frac{{\left(h^{(i)}\right)}^2}{q^{(i)}} \\ & \overset{(\star)}{\ge}  {\beta} \left(\sum\limits_{i=1}^n |h^{(i)}|\right)^2 = {\beta} \|h\|_1^2.
\end{align*}
The last inequality $(\star) $ holds due to 
\[
\sum\limits_{i=1}^n |h^{(i)}| = \sum\limits_{i=1}^n \frac{|h^{(i)}|}{\sqrt{q^{(i)}}}\sqrt{q^{(i)}} \le \sqrt{\sum\limits_{i=1}^{n}\frac{(h^{(i)})^2}{q^{(i)}}} \sqrt{\sum\limits_{i=1}^{n} q^{(i)}} = \sqrt{\sum\limits_{i=1}^{n}\frac{(h^{(i)})^2}{q^{(i)}}}.
\] 
Taking squares on both sides of this inequality gives ($\star$).
Overall, $f$ is ${\beta}$-strongly convex. 
Next, we turn our attention to the second part of $E^\star$, denoting the latter by 
\[
g(q)= \sum\limits_{\ell =1}^{L} (1 - \mu_\ell) \left(\sum\limits_{i \in N_\ell}^{} q^{(i)}\right) \ln \left(\sum\limits_{i \in N_\ell}^{} q^{(i)}\right). 
\]
Clearly, $g$ is convex in $q$. 
 It remains to recall that $E^\star$ -- as the sum of a $\beta$-strongly convex function $f$ and the convex function $g$ -- is $\beta$-strongly convex.
\end{proof}

The next result follows immediately. 

\begin{corollary}[Strong smoothness of $E$]
\label{cor:smoothness.nl.surplus}
	The nested logit surplus function $E$ is strongly smooth with modulus $B = \frac{1}{\underset{1 \le \ell \le L}{\min} \mu_\ell}$.
\end{corollary}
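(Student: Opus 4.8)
The plan is to derive the corollary directly from Lemma \ref{lem:sc.of.nl.conjugate} by invoking the standard conjugate duality between strong convexity and strong smoothness, so that essentially no new computation is required. The key fact I would use is the following duality statement: a closed proper convex function $\phi$ is $\beta$-strongly convex with respect to a norm $\|\cdot\|$ if and only if its convex conjugate $\phi^\star$ is $\frac{1}{\beta}$-strongly smooth with respect to the dual norm $\|\cdot\|_\ast$. This result is classical and can be found, e.g., in \cite{nesterovbook}.

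I would apply this statement with $\phi = E^\star$. By Lemma \ref{lem:sc.of.nl.conjugate}, the conjugate $E^\star$ is $\beta$-strongly convex with respect to $\|\cdot\|_1$, where $\beta = \min_{1 \le \ell \le L} \mu_\ell$. Since the dual norm of $\|\cdot\|_1$ is $\|\cdot\|_\infty$, the duality then yields that $(E^\star)^\star$ is $\frac{1}{\beta}$-strongly smooth with respect to $\|\cdot\|_\infty$, that is,
\[
\|\nabla (E^\star)^\star(v) - \nabla (E^\star)^\star(\bar v)\|_1 \le \frac{1}{\beta}\,\|v - \bar v\|_\infty,
\]
which is precisely the estimate from Definition \ref{strong smoothness}. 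Observe that the gradient differences are measured in $\|\cdot\|_1$, consistently with the gradients belonging to the dual of the space equipped with $\|\cdot\|_\infty$.

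It then remains to identify $(E^\star)^\star$ with $E$. Because $E$ is convex and differentiable on all of $\mathbb{R}^n$, it is in particular closed and proper, so the biconjugacy theorem gives $(E^\star)^\star = E$. Substituting $\beta = \min_{1 \le \ell \le L} \mu_\ell$ finally produces the claimed modulus $B = \frac{1}{\min_{1 \le \ell \le L} \mu_\ell}$, completing the argument.

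The step requiring the most care is the correct pairing of the norms together with the restricted effective domain of $E^\star$: the conjugate $E^\star$ is finite only on the simplex $\triangle$, so one must ensure that the duality theorem is invoked in a form accommodating this effective domain, and that the $\ell_1$/$\ell_\infty$ duality is applied in the right direction (strong convexity in the primal norm transfers to strong smoothness in the dual norm, and vice versa). Once the norm pairing and the biconjugacy $(E^\star)^\star = E$ are pinned down, the conclusion is immediate, which is why the result is stated as a corollary.
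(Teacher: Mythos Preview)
Your proposal is correct and follows essentially the same approach as the paper: both invoke the conjugate duality between strong convexity and strong smoothness (the paper cites \cite{kakade2009duality}, you cite \cite{nesterovbook}) applied to $E^\star$, which is $\beta$-strongly convex in $\|\cdot\|_1$ by Lemma~\ref{lem:sc.of.nl.conjugate}, to conclude that $E$ is $\frac{1}{\beta}$-strongly smooth in $\|\cdot\|_\infty$. Your added remarks on biconjugacy and the restricted domain of $E^\star$ are useful clarifications but do not change the argument.
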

\begin{proof}
	We apply  \cite[Theorem 6]{kakade2009duality}. It states that $E^\star $ is $\beta$-strongly convex with respect to the $\|\cdot\|_1 $ norm if and only if $E$ is $\frac{1}{\beta}$-strongly smooth with respect to the dual maximum norm $\|\cdot\|_\infty. $ In the view of Lemma \ref{lem:sc.of.nl.conjugate}, the convex conjugate of $E$ is $ \underset{1 \le \ell \le L}{\min} \mu_\ell$-strongly convex, hence, the assertion follows.
\end{proof}

\begin{remark}[Generalized nested logit]
The nested logit model belongs to a special class of distributions of random errors called generalized nested logit models (GNL), which were introduced in \cite{gnl}. For these models the vector of random errors $\epsilon$ follows the joint distribution 
\[
f_\epsilon\left(z\right) = \frac{\partial^n\exp\left(-\sum\limits_{\ell=1}^{L}\left(\sum\limits_{i=1}^{n}\left(\sigma_{i\ell}\cdot e^{-z^{(i)}}\right)^{1/\mu_\ell}\right)^{\mu_\ell/\mu}\right)}{\partial z^{(1)}\cdots\partial z^{(n)}},
\] 
where $z=\left(z^{(1)},\ldots,z^{(n)}\right)^T \in \mathbb{R}^n$. Different nests $\ell=1,\ldots,L$ are endowed with parameters $\mu_\ell > 0$ reflecting the variance while choosing alternatives within the nests. The variance of the choice among the nests is described by $\mu > 0$. Additionally, $\mu_\ell \leq \mu$ is assumed for all $\ell=1,\ldots,L$. Every alternative can belong to more than one nest, hence, the parameters $\sigma_{i\ell} > 0$ give the share of $i$-th alternative to belong to the $\ell$-th nest. For any fixed $i \in I$ it holds therefore:
\[
\sum_{\ell=1}^{L} \sigma_{i\ell} = 1.
\] 
In the case of nested logit, there is a unique nest $\ell_i \in \{1,\ldots,L\}$ where the $i$-th alternative belongs to, i.\,e. $\sigma_{i\ell_i} = 1$. Thus, the nests are mutually exclusive. Furthermore, we have $\mu = 1$. 
Recently, estimations for the strong smoothness parameter of GNL surplus functions have been derived  in \cite{mueller:2019}:
 \[
   \hat{M} = \frac{2}{\underset{1\le \ell \le L}{\min} \mu_\ell}- 1/\mu.
 \]
 For the nested logit, the estimator
 \[
 \hat{M}=\frac{2}{\underset{1\le \ell \le L}{\min} \mu_\ell} -1 < \frac{2}{\underset{1\le \ell \le L}{\min} \mu_\ell} = 2 B
 \]
 is at most twice bigger than the modulus from Corollary \ref{cor:smoothness.nl.surplus}.
 We note that for other GNL specifications the modulus of strong smoothness is not known yet. \qed
\end{remark}
%
%

\section{Pricing Problem} 

\subsection{Demand}

 In order to face the dynamic pricing, we consider a population of consumers  whose demand follows the nested logit model. 
 We divide the consumers into $J$ representative types and denote the number of consumers corresponding to the type $j $ as $\mathcal{N}_j$. 
 Let $p=\left(p^{(1)}, \ldots, p^{(n)} \right)^T \in \mathbb{R}^n_+$ denote the prices of products $1, \ldots, n$.
 The expected revenue of a consumer of type $j$ is given by
  \[
       E_j(p) = \mathbb{E}_{\epsilon_j} \left(\underset{1 \leq i \leq n}{\max} a^{(i)}_j - p^{(i)} + \epsilon^{(i)}_j\right), 
 \] 
 where $a^{(i)}_j$ and $\epsilon^{(i)}_j$ are the observable and random utility attached to the $i$-th product by a consumer of type $j$, respectively. 
 In other words, the deterministic utility is 
 \[
  v^{(i)}_j = a^{(i)}_j - p^{(i)}.
 \]
 { We point out that utility is taken here as to be transferable}.
 %
%
  Given the price vector $p$, the expected demand of the $i$-th alternative realized by a consumer of type $j$ equals to the choice probability
  \[
  x^{(i)}_j(p) = \mathbb{P} \left( a^{(i)}_j - p^{(i)}+  \epsilon^{(i)}_j = \underset{1 \leq i \leq n}{\max} a^{(i)}_j - p^{(i)}+  \epsilon^{(i)}_j \right).
  \]
  Note that for the demand vector $x_j(p) = \left(x^{(1)}_j, \ldots, x^{(n)}_j \right)^T$ it holds $x_j(p) \in \triangle$. This means that the overall normalized demand is divided between $n$ alternatives according to their  choice probabilities. We refer to $x_j(p)$ as the expected demand of a consumer of type $j$. For the latter it holds due to \eqref{dalytheorem}:
  \[
  x_j(p) = - \nabla E_j(p).
  \]
    We assume that the vector of random utilities $\epsilon_j=\left(\epsilon^{(1)}_j, \ldots, \epsilon^{(n)}_j \right)^T$ follows the nested logit model with nests $N_{j\ell}$, and nest parameters $0<\mu_{j\ell} \leq 1$ for $\ell =1, \ldots, L_j$.

\subsection{Supply}
 Let us start in the general case with $K$ suppliers. Each supplier offers a vector $y_k \in \mathcal{Y}_k$, where $\mathcal{Y}_k \subset \mathbb{R}^n $ is a closed and convex set reflecting the capacity constraints, $k=1,\ldots, K$.
 Each supplier  has a natural supply level $\hat{y}_k \in \mathbb{R}^n$ and faces additional quantity adjustment costs, so that the corresponding cost function is
\[
	c_k\left(y_k\right)= \hat c_k\left(y_k\right) + \Gamma_k\cdot\|y_k-\hat{y}_k\|_2^2,
\]
where $\hat c_k: \mathbb{R}^n \rightarrow \mathbb{R}$ is convex, and $\Gamma_k >0$. Note that $c_k: \mathbb{R}^n \rightarrow \mathbb{R}$ is $\Gamma_k$-strongly convex with respect to $\|\cdot\|_2$. 

\begin{remark}[Quantity adjustment costs]
	We address the issue of quantity adjustment costs. The idea of price rigidity due to adjustment costs is well known in economics, see e.\,g. \cite{mankiw1985small} and \cite{sheshinski1977inflation}. As it is argued in \cite{andersen1995adjustment}, neglecting similar adjustment costs for quantities would cause an asymmetry towards quantity flexibility. Furthermore \cite{ginsburgh1991quantity} provides theoretical justification for modeling costly quantity adjustments.  Additionally, as \cite{danziger2008adjustment} summarizes, there is no empirical evidence for neglecting these costs. In our context, it seems natural to include some sort of adjustment costs on the supply side. While suppliers may be able to react on an increase or decrease of demand, they will have to make short-term adjustments on their plans, e.\,g. they might have to shut down some capacities or must acquire costly new machines, which workers have to be trained for. 
	By the properties of $\|\cdot\|_2$-norm, we penalize deviations from the natural production level in a symmetric way, i.\,e. we assume adjustment costs due to higher demand to be as costly as costs due to a decrease in demand. Beyond that, we assume that suppliers are sensitive towards big deviations.  \qed
\end{remark}

Due to the presence of a pricing agent, the suppliers are price takers. Hence, given the prices $p\in \mathbb{R}^n_+$ of products the $k$-th supplier maximizes the profit
 \begin{equation}
   \label{eq:supply}
    \pi_k(p)=\underset{y_k \in \mathcal{Y}_k}{\max} \left\langle p,y_k \right\rangle - c_k\left(y_k\right). 
\end{equation}
We denote the unique solution of \eqref{eq:supply} by $y_k(p)$. Due to the strong convexity of the cost function $c_k$, the profit $\pi(p)$ is differentiable, and for the supply we have:
\[
 y_k(p) = \nabla \pi(p).
\]
 
\subsection{Market clearing}
  
  In this section, we present a dynamic pricing model which is based on the observed discrete choice demand. Specifically, we derive equilibrium prices assuming the additive random utility behavior of consumers. Our key idea is to characterize a suitable vector of prices which clears the market.  
  As the discrete choice demand is stochastic, we refer to an equilibrium price, whenever it clears the market on average. In other words, the equilibrium price matches total expected
  demand and total supply.
\begin{definition}[Equilibrium price]
\label{def:eq.price}
	A vector $p^\star \in \mathbb{R}^n$ is called equilibrium price, if the market clears on average, i.\,e.
	\[
	  p^\star \geq 0, \quad  \sum\limits_{k=1}^{K} y_k(p^\star) -\sum\limits_{j=1}^{J}\mathcal{N}_j\cdot x_j(p^\star)\geq 0, 
	\]
	and
	\[
	 \left\langle p^\star,  \sum\limits_{k=1}^{K} y_k(p^\star) -\sum\limits_{j=1}^{J}\mathcal{N}_j\cdot x_j(p^\star) \right\rangle = 0.
	\]
\end{definition}
 In order to face the pricing problem, we present a way to characterize such equilibrium prices. For that, we define the total expected revenue function, which is inspired by the total excessive revenue function in \cite{nesterov2017distributed}. The key ingredient is to sum up all the revenues of the market participants, i.\,e. consumers and suppliers, who naturally have different preferences concerning the prices. 
\begin{definition}[Total expected revenue]\label{def:ter}
	The total expected revenue function of the market with discrete choice demand is 
	\begin{equation}\label{eq:ter}
	TER(p) = \sum\limits_{k=1}^{K} 
	\pi_k\left( p\right)+\sum\limits_{j=1}^{J} \mathcal{N}_j E_j(p). 
	 	\end{equation}
\end{definition}
 
 The pricing agent has to outbalance contrary price interests of consumers and suppliers, in order to provide an efficient marketplace and secure participants' loyalty.  Note that the function $TER$ is convex and differentiable. 
 
 In what follows, we show how the pricing agent can take advantage of the total expected revenue function, in order to maximize the participants' welfare. For that, let us characterize equilibrium prices by making an additional assumption.
 \begin{assumption}[Productivity condition]\label{ass:rc}
 	There exist vectors $ \bar{y}_k \in \mathcal{Y}_k$, $k=1, \ldots, K$, and $ \bar{q}_j \in \triangle $, $j=1,\ldots,J$, such that the total supply strictly exceeds the total expected demand:
 	\[
 	\sum\limits_{k=1}^{K}\bar{y}_k > \sum\limits_{j=1}^{J} \mathcal{N}_j\bar{q}_j.
 	\] 
 \end{assumption}
 The productivity condition has an economic justification, namely there must be at least one scenario where a demand can be satisfied by the suppliers. Otherwise, consumers would presumably leave the market, as their demand cannot be matched. Hence, Assumption \ref{ass:rc} is reasonable and not very restrictive.
  
\begin{lemma} \label{lem:sublevel}
 The total expected revenue function $TER$ has bounded sublevel sets. 
 \end{lemma}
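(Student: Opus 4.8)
The plan is to show that $TER(p) \to +\infty$ as $\|p\| \to \infty$, i.e. that $TER$ is coercive; since $TER$ is convex and continuous, coercivity is equivalent to having bounded sublevel sets. The strategy is to isolate, in the definition $TER(p) = \sum_k \pi_k(p) + \sum_j \mathcal{N}_j E_j(p)$, a term that grows superlinearly in $p$ while controlling the remaining terms from below by an affine function. The key structural feature to exploit is the Productivity Condition (Assumption \ref{ass:rc}), which furnishes feasible points $\bar y_k \in \mathcal{Y}_k$ and $\bar q_j \in \triangle$ with $\sum_k \bar y_k > \sum_j \mathcal{N}_j \bar q_j$ componentwise, so that the vector $d = \sum_k \bar y_k - \sum_j \mathcal{N}_j \bar q_j$ is strictly positive.

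First I would lower-bound each supplier profit $\pi_k$. Since $\bar y_k$ is feasible for the maximization problem \eqref{eq:supply}, we have $\pi_k(p) \ge \langle p, \bar y_k\rangle - c_k(\bar y_k)$, giving an affine-in-$p$ lower bound with slope $\bar y_k$ and a constant depending only on the data. Summing over $k$ yields $\sum_k \pi_k(p) \ge \langle p, \sum_k \bar y_k\rangle - C_1$ for a constant $C_1$. Next I would lower-bound each consumer surplus $E_j$. Because $E_j$ is convex with $\nabla E_j(p) = -x_j(p)$ and the demand $x_j(p) \in \triangle$, the subgradient inequality at any reference point, together with the bounded range of $x_j$ over the simplex, gives a bound of the form $E_j(p) \ge -\langle p, \bar q_j\rangle - C_{2,j}$; more directly, from the definition $E_j(p) = \mathbb{E}_{\epsilon_j}(\max_i a_j^{(i)} - p^{(i)} + \epsilon_j^{(i)}) \ge \mathbb{E}_{\epsilon_j}(a_j^{(i_0)} - p^{(i_0)} + \epsilon_j^{(i_0)})$ for any fixed index $i_0$, and choosing the index adaptively recovers $E_j(p) \ge -\langle p, \bar q_j\rangle - C_{2,j}$ using $\bar q_j \in \triangle$. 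Combining, I obtain
\[
TER(p) \ge \Big\langle p,\ \sum_{k=1}^{K}\bar y_k - \sum_{j=1}^{J}\mathcal{N}_j \bar q_j \Big\rangle - C = \langle p, d\rangle - C,
\]
where $C = C_1 + \sum_j \mathcal{N}_j C_{2,j}$ and $d > 0$.

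The remaining obstacle is that the affine bound $\langle p, d\rangle - C$ only forces $TER \to +\infty$ along directions where $\langle p, d\rangle \to +\infty$; it does nothing to prevent escape to infinity along directions with $\langle p, d\rangle$ bounded or tending to $-\infty$, such as $p \to -\infty$. I would handle this by splitting according to the sign structure of $p$. For the nonnegativity obstruction I would note that the demand side already penalizes large negative prices: driving $p^{(i)} \to -\infty$ makes the $\max$ inside $E_j$ blow up, so $E_j(p)$ itself grows at least like $\max_i(-p^{(i)})$ up to additive constants, which I would make precise by bounding $E_j(p) \ge -p^{(i_0)} + (\text{const})$ with $i_0$ chosen as the index of the most negative coordinate. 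Thus along any sequence $\|p_t\| \to \infty$, either some coordinate tends to $+\infty$ (handled by the positive term $\langle p, d\rangle$ with $d>0$, since each component of $d$ is strictly positive) or some coordinate tends to $-\infty$ (handled by the surplus lower bound). The main technical care needed is to combine these two mechanisms so that every escape direction is covered; since $d > 0$ strictly, the inner product $\langle p, d\rangle$ grows whenever any coordinate grows positively, and the adaptive surplus bound covers negative growth, so the two estimates together give coercivity and hence bounded sublevel sets.
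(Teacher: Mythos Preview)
Your derivation of the affine lower bound $TER(p) \ge \langle p, d\rangle - C$ with $d = \sum_k \bar y_k - \sum_j \mathcal{N}_j \bar q_j > 0$ is correct and is exactly what the paper does; the paper obtains the consumer bound more directly via the dual representation $E_j(p) = \max_{q_j \in \triangle}\langle q_j, a_j - p\rangle - E_j^\star(q_j) \ge \langle \bar q_j, a_j - p\rangle - E_j^\star(\bar q_j)$, but your expectation-of-max route reaches the same inequality.

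Where you diverge is the last paragraph. The paper only establishes bounded sublevel sets on $\mathbb{R}^n_+$, which is all that is needed since Theorem~\ref{thm:eq.prices} minimizes $TER$ over nonnegative prices. On the orthant the conclusion is immediate from the affine bound: $p \ge 0$ and $d > 0$ componentwise give $\langle p, d\rangle \ge (\min_i d^{(i)})\,\|p\|_1$, hence $TER(p) \ge (\min_i d^{(i)})\,\|p\|_1 - C$. Your attempt to extend to all of $\mathbb{R}^n$ is therefore unnecessary, and as written it contains a genuine gap: the claim that ``$\langle p, d\rangle$ grows whenever any coordinate grows positively'' is false, since other coordinates may tend to $-\infty$ fast enough to force $\langle p, d\rangle \to -\infty$ despite $d > 0$. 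Your two mechanisms (the $d$-bound and the adaptive surplus bound $E_j(p) \ge -p^{(i_0)} + \text{const}$) cannot simply be invoked alternately along a sequence; a rigorous argument on all of $\mathbb{R}^n$ would have to vary $q_j \in \triangle$ with the direction of escape and control the resulting constants uniformly. Restricting to $p \ge 0$ removes the difficulty entirely.
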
 
 \begin{proof}
 	From convex duality, we have 
 	\[
 	  E_j(p) =\mathbb{E}\left(\underset{1 \leq i \leq n}{\max} a_j^{(i)} -p^{(i)} + \epsilon^{(i)}_j\right) = \underset{q_j \in \triangle }{\max} \left\langle q_j, a_j-p \right\rangle - E^\star(q_j). 
 	\] 
 	Then for the total revenue function holds:
 	\[
 	\begin{array}{rcl}
 	  TER(p) &=& \sum\limits_{k=1}^{K} \pi_k(p) + \sum\limits_{j=1}^{J} \mathcal{N}_j E_j(p)\\ \\
 	  &=&
 	\sum\limits_{k=1}^{K} \underset{y_k \in \mathcal{Y}_k}{\max} \; \left\langle p,y_k \right\rangle - c_k\left(y_k\right) +\sum\limits_{j=1}^{J} \mathcal{N}_j \underset{q_j \in \triangle }{\max} \left\langle q_j, a_j-p \right\rangle - E^\star(q_j) \\ \\
 	&\ge& \sum\limits_{k=1}^{K} \left\langle p, \bar{y}_k \right\rangle -c_k(\bar{y}_k) + \sum\limits_{j=1}^{J} \mathcal{N}_j \left( \left\langle \bar{q_j}, a_j-p \right\rangle - E^\star(\bar{q_j})\right)\\ \\
 	&= & \left\langle p, \sum\limits_{k=1}^{K}\bar{y}_k - \sum\limits_{j=1}^{J} \mathcal{N}_j\bar{q}_j \right\rangle + \underbrace{ \sum\limits_{j=1}^{J}  \mathcal{N}_j \left(\left\langle \bar{q}_j, a_j \right\rangle - E^\star (\bar{q}_j) \right)- \sum\limits_{k=1}^{K} c_k(\bar{y}_k)}_{= C}.
 	 \\ \\
 	 &=   &\left\langle p, \sum\limits_{k=1}^{K}\bar{y}_k - \sum\limits_{j=1}^{J} \mathcal{N}_j\bar{q}_j \right\rangle + C.
 	\end{array}
 	\]
 	{ Due to Assumption \ref{ass:rc}, there exists $t \in \mathbb{R}_{++} $ such that it holds:
 	\[
 	\left\langle p, \sum\limits_{k=1}^{K}\bar{y}_k - \sum\limits_{j=1}^{J} \mathcal{N}_j\bar{q}_j \right\rangle \ge \left\langle t\cdot e_n, p \right\rangle.
 	\] 
 	Hence, we get for $p\in \mathbb{R}^n_{+}$:
 	\[
 	TER(p) \geq t||p||_1 + C.
 	\]
 	The latter provides that the sublevel sets of $TER$ are bounded.}
 \end{proof}

 We now characterize the equilibrium prices. 
 
 \begin{theorem}[Equilibrium prices and minimizers of $TER$] 
 \label{thm:eq.prices}
 	The minimization problem 
 	\begin{equation}
 	\label{eq:minter}
 	  \min_{p \in \mathbb{R}^n_+} TER(p)  
 	\end{equation}
 	is solvable, and its solutions are exactly the equilibrium prices.
 \end{theorem}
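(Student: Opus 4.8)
The plan is to recognize that the three equilibrium conditions in Definition \ref{def:eq.price} are exactly the first-order optimality (KKT) conditions for the convex program \eqref{eq:minter}, and then to exploit convexity so that these conditions are simultaneously necessary and sufficient for a minimizer. The bridge is the gradient of $TER$. Since the previous subsections established $y_k(p) = \nabla \pi_k(p)$ and $x_j(p) = -\nabla E_j(p)$, differentiating \eqref{eq:ter} termwise yields
\[
\nabla TER(p) = \sum_{k=1}^{K} y_k(p) - \sum_{j=1}^{J} \mathcal{N}_j \, x_j(p),
\]
so that $\nabla TER(p)$ is precisely the excess supply vector appearing in Definition \ref{def:eq.price}.

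First I would settle solvability. Each $\pi_k$ and each $E_j$ is convex and differentiable, hence $TER$ is convex and continuous on $\mathbb{R}^n_+$. By Lemma \ref{lem:sublevel} its sublevel sets are bounded, so intersecting the sublevel set at the value of any fixed feasible point with the closed set $\mathbb{R}^n_+$ produces a compact set on which the continuous function $TER$ attains its minimum. This gives existence of a solution to \eqref{eq:minter}.

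Next I would characterize the minimizers. Because $TER$ is convex and differentiable and $\mathbb{R}^n_+$ is convex, a feasible $p^\star$ solves \eqref{eq:minter} if and only if it satisfies the variational inequality $\langle \nabla TER(p^\star), p - p^\star \rangle \ge 0$ for all $p \in \mathbb{R}^n_+$. Specializing to the orthant: increasing a single coordinate of $p^\star$ preserves feasibility and forces the corresponding component of $\nabla TER(p^\star)$ to be nonnegative, hence $\nabla TER(p^\star) \ge 0$; testing with $p = 0$ then yields $\langle \nabla TER(p^\star), p^\star \rangle \le 0$, which combined with $\nabla TER(p^\star) \ge 0$ and $p^\star \ge 0$ forces $\langle \nabla TER(p^\star), p^\star \rangle = 0$. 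Conversely, these two conditions imply the variational inequality. Substituting the gradient formula, the optimality system becomes
\[
\sum_{k=1}^{K} y_k(p^\star) - \sum_{j=1}^{J} \mathcal{N}_j \, x_j(p^\star) \ge 0, \quad \left\langle p^\star, \, \sum_{k=1}^{K} y_k(p^\star) - \sum_{j=1}^{J} \mathcal{N}_j \, x_j(p^\star) \right\rangle = 0,
\]
which together with feasibility $p^\star \ge 0$ is exactly Definition \ref{def:eq.price}.

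This is essentially a routine KKT computation; the only point requiring care is the bookkeeping of the variational inequality over the orthant and the observation that convexity renders the first-order conditions both necessary and sufficient, which is precisely what justifies the word \emph{exactly} in the statement. The genuinely nontrivial ingredient, namely that the infimum is attained rather than merely approached, has already been supplied by Lemma \ref{lem:sublevel}.
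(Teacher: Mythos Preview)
Your proof is correct and follows essentially the same approach as the paper: existence via Lemma \ref{lem:sublevel} and continuity/convexity, then the characterization by writing the first-order optimality condition as the variational inequality over $\mathbb{R}^n_+$, reducing it to the complementarity system $p^\star \ge 0$, $\nabla TER(p^\star) \ge 0$, $\langle p^\star,\nabla TER(p^\star)\rangle = 0$, and finally substituting the gradient formula. Your version is in fact slightly more explicit than the paper's in spelling out the equivalence between the variational inequality and the componentwise KKT conditions.
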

 \begin{proof}
 	Because of the convexity of $TER$ and Lemma \ref{lem:sublevel}, the existence of its minimizers $p^\star \in \mathbb{R}^n_+$ is guaranteed. The optimality condition for \eqref{eq:minter} reads:
 	\[
 	   \left\langle \nabla TER\left(p^\star\right), p-p^\star \right\rangle \geq 0 \quad \mbox{ for all } p \in \mathbb{R}^n_+.
 	\]
 	This is equivalent to
 	\[
 	p^\star \geq 0, \quad \nabla TER\left(p^\star\right) \geq 0, \quad
	 \left\langle p^\star, \nabla TER\left(p^\star\right) \right\rangle = 0.
	 	\]
 	By substituting the gradient 
 	\begin{equation}
 	    \label{char.eq.prices}
 	 	\nabla TER\left(p^\star\right) = \sum\limits_{k=1}^{K} y_k(p^\star) - \sum\limits_{j=1}^{J} \mathcal{N}_j x_j(p^\star),
 	\end{equation}
 	the latter coincides with the market clearing condition for equilibrium prices. 
 \end{proof}
 Theorem \ref{thm:eq.prices} gives a guidance for the pricing agent. By minimizing $TER$, he clears the market on average and, therefore, avoids deadweight loss. 

We derive the modulus of strong smoothness of $TER$.
 
  \begin{theorem}[Strong smoothness of $TER$] \label{thm:smoothness.TER} 
  	The total expected revenue function $TER$ is  $\left(\sum\limits_{j=1}^{J} \frac{\mathcal{N}_j}{\beta_j} + \sum\limits_{k=1}^{K} \frac{1}{\Gamma_k}\right)$-strongly smooth with respect to $\|\cdot\|_2$, where
  	\[
  	\beta_j = \underset{ 1\leq \ell^\leq L_{j}}{\min} \, \mu_{j\ell}.
  	\]
  \end{theorem}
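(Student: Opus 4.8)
The plan is to bound the Lipschitz constant of $\nabla TER$ with respect to $\|\cdot\|_2$ by treating the supply summands and the demand summands separately and then recombining them via the triangle inequality. Since the gradient splits additively as $\nabla TER(p) = \sum_{k=1}^{K} \nabla \pi_k(p) + \sum_{j=1}^{J} \mathcal{N}_j \nabla E_j(p)$, it suffices to establish a $\|\cdot\|_2$-smoothness modulus for each $\pi_k$ and each $E_j$ individually, and then add these moduli with weights $1$ and $\mathcal{N}_j$, respectively.

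For the consumer part, I would first note that $E_j(p) = E(a_j - p)$, where $E$ is the nested logit surplus function \eqref{nested logit surplus} equipped with the nest parameters $\mu_{j\ell}$, so that $\nabla E_j(p) = -\nabla E(a_j - p)$. Corollary \ref{cor:smoothness.nl.surplus} then gives $\|\nabla E_j(p) - \nabla E_j(\bar p)\|_1 \le \tfrac{1}{\beta_j}\|p - \bar p\|_\infty$. The key step is to convert this $(\|\cdot\|_\infty, \|\cdot\|_1)$ estimate into a $(\|\cdot\|_2, \|\cdot\|_2)$ one carrying the \emph{same} modulus, by applying the elementary inequalities $\|\cdot\|_2 \le \|\cdot\|_1$ to the gradient difference and $\|\cdot\|_\infty \le \|\cdot\|_2$ to the argument difference. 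This chain preserves the constant $1/\beta_j$ exactly, with no dimension-dependent factor creeping in, which is precisely what makes the final modulus sharp.

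For the supply part, I would identify $\pi_k$ in \eqref{eq:supply} as the convex conjugate of the extended cost $c_k + \iota_{\mathcal{Y}_k}$, where $\iota_{\mathcal{Y}_k}$ denotes the indicator function of the capacity set. Since $c_k$ is $\Gamma_k$-strongly convex with respect to $\|\cdot\|_2$ and adding the convex indicator preserves strong convexity, the extended cost is $\Gamma_k$-strongly convex. Invoking the conjugate duality of \cite[Theorem 6]{kakade2009duality} together with the self-duality of $\|\cdot\|_2$, I conclude that $\pi_k$ is $\tfrac{1}{\Gamma_k}$-strongly smooth with respect to $\|\cdot\|_2$.

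Finally, combining the two families of estimates by the triangle inequality yields
\[
\|\nabla TER(p) - \nabla TER(\bar p)\|_2 \le \left(\sum_{k=1}^{K} \frac{1}{\Gamma_k} + \sum_{j=1}^{J} \frac{\mathcal{N}_j}{\beta_j}\right)\|p - \bar p\|_2,
\]
which is exactly the claimed modulus. The only genuinely delicate point is the norm conversion in the consumer part: one must be careful to apply $\|\cdot\|_2 \le \|\cdot\|_1$ and $\|\cdot\|_\infty \le \|\cdot\|_2$ to the correct factors so that the modulus is not inflated. Everything else is a routine assembly of facts already available in the excerpt.
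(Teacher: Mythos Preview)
Your proposal is correct and follows essentially the same route as the paper's proof: split $TER$ into consumer and supplier summands, transfer the $(\|\cdot\|_\infty,\|\cdot\|_1)$-smoothness of each $E_j$ from Corollary~\ref{cor:smoothness.nl.surplus} to $\|\cdot\|_2$-smoothness with the same modulus $1/\beta_j$, obtain $1/\Gamma_k$-smoothness of $\pi_k$ from the $\Gamma_k$-strong convexity of $c_k$ via conjugate duality, and combine by the triangle inequality. If anything, you are more explicit than the paper about the norm-conversion inequalities and about identifying $\pi_k$ as the conjugate of $c_k+\iota_{\mathcal{Y}_k}$, but the underlying argument is identical.
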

  \begin{proof}
  Recall that 
\[  
  	TER(p) = \sum\limits_{k=1}^{K} 
	\pi_k\left( p\right) +\sum\limits_{j=1}^{J} \mathcal{N}_j E_j(p). 
\]	
  	The nested logit surplus term $E_j$ is $\frac{1}{\beta_j}$-strongly smooth with respect to $\|\cdot\|_\infty$ for $j=1, \ldots, J$, due to Corollary \ref{cor:smoothness.nl.surplus}. Hence it is also at least $\frac{1}{\beta_j}$-strongly smooth with respect to $\|\cdot\|_2$. 
  	The triangle inequality leads to the $\sum\limits_{j=1}^{J}\frac{\mathcal{N}_j}{\beta_j}$-strongly smoothness for the consumers term. Consider the $k$-th suppliers total costs $$c_k\left(y_k\right) = \hat c_k\left(y_k\right) + \Gamma_k\cdot\|y_k-\hat{y}_k\|^2,$$ which is at least $\Gamma_k$-strongly convex with respect to $\|\cdot\|_2$. Strong smoothness of the supply term follows then by similar argumentation, which concludes the proof. 
  \end{proof}
 

\section{Dynamics}
 In this chapter, we describe the strategy of a pricing agent, who aims to find equilibrium prices, in order to clear the market. While the $TER$-function itself must not be known to the pricing agent, the latter can take advantage of the gradient derived in \eqref{char.eq.prices}.  We make an assumption regarding the available information. 
 \begin{assumption}\label{ass:information}
 	At each period, the pricing agent can observe demand and supply at the market. Furthermore, at least one price $p_0$ fulfilling Assumption \ref{ass:rc}  is known.
 \end{assumption}  
 
 In what follows, we justify Assumption \ref{ass:information} by giving some examples of the pricing agent.

\begin{itemize}
    \item  {\bfseries Online marketplaces and intermediaries.}  Some of these marketplaces offer smart price options, which result in a pricing problem \eqref{eq:minter}. The agent receives differentiated alternatives from the suppliers of the platform and the demand from consumers. By choosing such an option, the suppliers automatically become price takers.  The goal of the pricing agent is to make the website as popular as possible, because nowadays operating a popular website is a valuable asset by itself. Therefore, the pricing agent shall outbalance demand and supply, in order to satisfy the participants. Otherwise, the market would become inefficient, as some of the possibly leaving participants could have been matched by proper pricing.   The setting of an online shop can be regarded as a special case of a marketplace with only one supplier offering goods. Price taking behavior can then be explained via different company departments, e.\,g. the online marketing department operates the online store.  Another popular trend of the e-commerce is so-called flash sales. They are widespread at websites offering discount specials and at traveling booking portals. The main point is to offer some fixed amounts of differentiated products, e.\,g.  exclusive holidays, for limited time.  Often the agents offering such a flash sale have the products already bought, hence the problem in Definition \ref{eq:ter} varies to the task of pricing product such that a fixed amount will be sold to an optimal price. 
    \item {\bfseries Financial intermediaries.} A similar scenario arises for the work of financial intermediaries such as brokers. There are potential sellers and buyers of assets, who are willing to make trades. The intermediary works as a market maker and, therefore, has to match supply and demand. Often the broker is paid per trade. Thus, it seems again to be a natural motivation for the broker to enable avoid unmatched demand and supply, i.\,e. to clear the market on average. In the last years, there has been rising popularity of P2P lending marketplaces. On these online platforms, borrowers and lenders are directly brought together. Hence, the P2P platform acts as a kind of intermediary and at the same time an online marketplace, where offering a pricing option is possible.  
\end{itemize}
 
Under Assumption \ref{ass:information} we can define an intuitive update rule for the prices.

\begin{tcolorbox}
	
	\begin{algo}\label{algo:pricing}
		For $t=0,1,2,\ldots$ update 
		\[
		p_{t+1} = \left[p_t - h\cdot \left(\sum\limits_{k=1}^{K}y_k(p_t)-\sum\limits_{j=1}^{J}\mathcal{N}_j\cdot x(p_t)\right)\right]_+,
		\]
		where the stepsize is 
		\[
		h\leq \frac{1}{\left(\sum\limits_{j=1}^{J} \frac{\mathcal{N}_j}{\beta_j} + \sum\limits_{k=1}^{K} \frac{1}{\Gamma_k}\right)}.
		\]
	\end{algo}
\end{tcolorbox}
 	
\noindent
Pricing Scheme \ref{algo:pricing} follows an economically reasonable idea. The agent chooses the new price of each alternative according to the difference between supply and demand. If supply of an alternative in the last period exceeded its demand, then its new  price will be lower than before, and vice versa. As prices have to be nonnegative, the price vector is projected on the nonnegative orthant. The convergence analysis of pricing scheme \ref{algo:pricing} follows  from the analysis of the proximal gradient methods see e.\,g. \cite{beck2017first}. In fact, 
Pricing Scheme \ref{algo:pricing} coincides with the proximal gradient method with constant stepsize for the problem \eqref{eq:minter}. For that, we recall that the function $TER$ is convex. Clearly, the inverse of the strong smoothness parameter of $TER$ is chosen as the largest possible stepsize. The prox-operator for the indicator function of the nonnegative orthant is simply the projection onto $\mathbb{R}^n_+$. The derivation of the gradient of the $TER$-function in \eqref{char.eq.prices} concludes the assertion.
Therefore, the rate of convergence follows from the analysis of the proximal gradient method. 
\begin{theorem}\label{corr:convergence.constant}[e.\,g. \cite{beck2017first}]
	Let Pricing Scheme \ref{algo:pricing} be applied. Then, the sequence $\left(p_t\right)_{t \geq 0}$ converges to an equilibrium price $p^\star$. Moreover it holds for $t \ge 0$:
	\[ TER\left(p_t\right) - TER(p^\star) \le \frac{\|p_0-p^\star\|_2^2}{2th}.
	\]
\end{theorem}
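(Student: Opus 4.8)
The plan is to recognize Pricing Scheme \ref{algo:pricing} as the constant-stepsize proximal gradient method for the constrained problem \eqref{eq:minter}, where the smooth part is the convex, $L$-smooth function $TER$ and the nonsmooth part is the indicator of $\mathbb{R}^n_+$. Here $L = \sum_{j=1}^{J}\frac{\mathcal{N}_j}{\beta_j} + \sum_{k=1}^{K}\frac{1}{\Gamma_k}$ is the modulus of strong smoothness with respect to $\|\cdot\|_2$ established in Theorem \ref{thm:smoothness.TER}, and the stepsize obeys $h \le 1/L$. The prox-operator of the indicator is exactly the Euclidean projection onto $\mathbb{R}^n_+$, i.\,e. the componentwise positive part $[\cdot]_+$, so the update in the scheme is the genuine proximal gradient step. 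Existence of a minimizer $p^\star$, which is an equilibrium price, is guaranteed by Theorem \ref{thm:eq.prices}.

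First I would invoke the descent lemma for the $L$-smooth function $TER$: for all $p, q \in \mathbb{R}^n$,
\[ TER(q) \le TER(p) + \langle \nabla TER(p), q-p \rangle + \frac{L}{2}\|q-p\|_2^2. \]
Applied at $p = p_t$, $q = p_{t+1}$ together with $h \le 1/L$, this bounds $TER(p_{t+1})$ by the quadratic model that the projection step minimizes over $\mathbb{R}^n_+$. Next I would use the variational inequality characterizing the projection: since $p_{t+1} = [p_t - h\nabla TER(p_t)]_+$ is the projection of $p_t - h\nabla TER(p_t)$ onto the convex set $\mathbb{R}^n_+$,
\[ \langle p_t - h\nabla TER(p_t) - p_{t+1}, \, p - p_{t+1} \rangle \le 0 \quad \text{for all } p \in \mathbb{R}^n_+. \]
Combining this with the descent lemma and the convexity bound $TER(p) \ge TER(p_t) + \langle \nabla TER(p_t), p - p_t\rangle$ yields the fundamental prox-gradient estimate
\[ TER(p_{t+1}) \le TER(p) + \frac{1}{2h}\left(\|p_t - p\|_2^2 - \|p_{t+1} - p\|_2^2\right) \quad \text{for all } p \in \mathbb{R}^n_+. \]

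Then I would set $p = p^\star$ and telescope over $s = 0, \dots, t-1$. The distance terms collapse, giving
\[ \sum_{s=0}^{t-1}\left(TER(p_{s+1}) - TER(p^\star)\right) \le \frac{1}{2h}\left(\|p_0 - p^\star\|_2^2 - \|p_t - p^\star\|_2^2\right) \le \frac{\|p_0-p^\star\|_2^2}{2h}. \]
Taking $p = p_t$ in the same estimate shows that $\left(TER(p_t)\right)_{t\ge 0}$ is nonincreasing, so the last summand dominates the average and
\[ TER(p_t) - TER(p^\star) \le \frac{1}{t}\sum_{s=0}^{t-1}\left(TER(p_{s+1}) - TER(p^\star)\right) \le \frac{\|p_0-p^\star\|_2^2}{2th}, \]
which is the claimed rate. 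For convergence of the iterates, the choice $p = p^\star$ above also gives $\|p_{t+1}-p^\star\|_2 \le \|p_t - p^\star\|_2$, so the sequence is Fej\'er-monotone with respect to the minimizer set and hence bounded. Since $TER(p_t) \to TER(p^\star)$ forces the step lengths $\|p_{t+1}-p_t\|_2 \to 0$, every accumulation point is a fixed point of the projected-gradient map, thus an equilibrium price by Theorem \ref{thm:eq.prices}; Fej\'er monotonicity then promotes subsequential convergence to convergence of the whole sequence.

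The main obstacle is the combination step producing the fundamental prox-gradient estimate: one must carefully merge the quadratic upper bound from the descent lemma with the projection's variational inequality, and verify that the cross terms assemble into the telescoping difference of squared distances, with $h \le 1/L$ exactly absorbing the $\frac{L}{2}$-term. All remaining steps -- telescoping, monotonicity of the objective, and the Fej\'er argument -- are routine bookkeeping. As Pricing Scheme \ref{algo:pricing} is precisely the constant-stepsize proximal gradient method, this entire argument is available in standard references such as \cite{beck2017first}.
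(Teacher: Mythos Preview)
Your proposal is correct and follows exactly the approach the paper takes: identify Pricing Scheme \ref{algo:pricing} as the constant-stepsize proximal gradient method for \eqref{eq:minter} (smooth part $TER$, nonsmooth part the indicator of $\mathbb{R}^n_+$, prox equal to $[\cdot]_+$), and invoke the standard convergence analysis from \cite{beck2017first}. The paper merely points to this reference, whereas you additionally spell out the descent lemma, the projection variational inequality, the telescoping, and the Fej\'er-monotonicity argument; there is no substantive difference in method.
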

In other words, by applying this strategy, the market is cleared  with an $\mathcal{O}(1/t)$ rate of convergence.
 Although the result of Theorem \ref{corr:convergence.constant} is not surprising from a mathematical point of view, it seems to be unexpected from an economical view. Recently, the study of convergence rates towards market equilibrium has been undertaken. In \cite{nesterov2017distributed}, the convergence rate of order $\mathcal{O}(1/\sqrt{t})$ is shown for a decentralized market with rational participants.  As explained in Section 3, consumers follow a rational behavior in our model, but they are prone to errors. Additionally, we assumed quantity adjustment costs for the supply side. Though at first glance those conditions may seem counterproductive for an efficient market, Theorem \ref{corr:convergence.constant} states that the pricing agent can find an equilibrium price faster than without both restrictions on participants behavior. To clarify the economic idea of the smoothing, we discuss the case of 
 $$h =\frac{1}{\left(\sum\limits_{j=1}^{J} \frac{\mathcal{N}_j}{\beta_j} + \sum\limits_{k=1}^{K} \frac{1}{\Gamma_k}\right)}.$$
 The upper bound in Theorem \ref{corr:convergence.constant} then becomes 
 \begin{equation}\label{eq:upper bound strong smoothness}	
  TER\left(p_t\right) - TER(p^\star) \le \frac{\left({\sum\limits_{j=1}^{J} \frac{\mathcal{N}_j}{\beta_j} + \sum\limits_{k=1}^{K} \frac{1}{\Gamma_k}}\right)\cdot\|p_0-p^\star\|_2^2}{2t}.
 \end{equation}

 Concerning supply side, this acceleration is reasonable because the quantity rigidity additionally hurts the suppliers. Hence, suppliers prefer a stable market, in order to adjust their long term natural supply level. This is also in accord with the upper bound for the precision of $TER$ in Equation \eqref{eq:upper bound strong smoothness}. A bigger parameter $\Gamma_k$ of $k$-th supplier's adjustment cost term will cause a smaller upper bound for the precision of $TER$. 
 We have already mentioned the duality between discrete choice  and rational inattention models. Namely, consumers choosing according to a discrete choice model, can also be seen as facing information processing costs \cite{shum}. Compared to a situation without information processing costs, consumers prefer to have a market with nonvolatile prices, which means lesser information to process. Again, the behavior is reflected in the upper bound in Equation \eqref{eq:upper bound strong smoothness}. { The smoothness parameter of consumers of type $j$ is 
\[
\beta_j = \underset{1\leq \ell \leq L}{\min} \; \mu_{j\ell}
\]
 from Theorem \ref{thm:smoothness.TER}.
 The parameters $\mu_{j\ell}$'s depend on how similar the alternatives within the nests are, see Proposition \ref{prop:corr}. Since the alternatives in the nest with the smallest correlation $1-\mu_{j\ell}^2 \approx 0$ are very different,} i.\,e. $\beta_j$ is close to one, consumers of type $j$ have to pay relatively high information processing costs in this case. Note that then even the alternatives within one nest can provide very different utilities.
 In extreme case $\mu_{j\ell}=1$ and, therefore, each alternative has to be thoroughly taken into account. On the other hand,  $\beta_j$ close to zero indicates that once one of the nests is chosen, the consumer will be very indifferent between the alternatives within. Thus, information processing costs decrease. 
 { Due to Proposition \ref{prop:corr}, 
high correlation of purchase alternatives within the nests corresponds to small $\mu_{j\ell}$'s, and hence, to small $\beta_j$. Consequently, in pricing schemes we perform short steps, since just relatively small step-sizes $h$ can be taken to guarantee the derived convergence rates. Intuitively speaking, the imperfect behaviour of consumers helps to facilitate pricing with respect to the convergence rate.}
 Previous discussion shows that the pricing agent is able to exploit the imperfect behavior of market participants. They will not vary their decisions as much as without quantity adjustment costs and information processing costs. Hence, the pricing agent gains worth information out of every new price set, which leads to faster convergence towards equilibrium prices.



As the nonnegative orthant is a closed and convex subset of $\mathbb{R}^n$ and the function $TER$ is strongly smooth and convex, the pricing agent is able to improve the rate of convergence for clearing the market. The acceleration of first order methods was first introduced in \cite{acceleration}. We suggest the following pricing scheme, which is an application of the fast proximal gradient method presented in \cite{beck2017first}.

\begin{tcolorbox}
\begin{algo}\label{algo:pricing:accel} By setting $q_0=p_0$, $\gamma_0=1$, for $t=0,1,2,\ldots$:
	\begin{itemize}
		\item[1)] Update $$p_{t+1} = \left[p_t - h\cdot \left(\sum\limits_{k=1}^{K}y_k\left(q_t\right)-\sum\limits_{j=1}^{J}\mathcal{N}_j\cdot x\left(q_t\right)\right)\right]_+,$$ 
		where the stepsize is
		\[
		h\leq\frac{1}{\left(\sum\limits_{j=1}^{J} \frac{\mathcal{N}_j}{\beta_j} + \sum\limits_{k=1}^{K} \frac{1}{\Gamma_k}\right)};
		\]
		\item[2)] Set $\gamma_{t+1}= \frac{1+\sqrt{1+4\gamma_t^2}}{2}$;
		\item[3)] Compute $q_{t+1}= p_{t+1} + \left(\frac{\gamma_t-1}{\gamma_{t+1}}\right)\left(p_{t+1}-p_t\right)$.
	\end{itemize}

\end{algo}
\end{tcolorbox}

\noindent
As mentioned above, Pricing Scheme \ref{algo:pricing:accel} is an application of the fast proximal gradient method, for which the $\mathcal{O}(1/t^2)$ rate of convergence has been shown, see e.\,g. \cite{beck2017first}). Without further conditions for the function $TER$, this rate is unimprovable, because the lower bound for first order methods is matched \cite{nesterovbook}.

Finally, we discuss the selection of stepsize parameter $h$.  Pricing schemes \ref{algo:pricing} and \ref{algo:pricing:accel} suggest a constant stepsize, which is less or equal the inverse of the smoothness parameter of $TER$. In practice, however, the exact estimation of the parameters $\beta_{j}$ for $j=1,\ldots,J$ as well as $\Gamma_k$ for $k=1,\ldots,K$ might be a difficult statistical problem. 
Recall that we have:
\[
\beta_j = \underset{1\leq \ell \leq L}{\min} \; \mu_{j\ell},
\]
and, due to the closed form of choice probabilities, the nested logit parameters $\mu_{j\ell}$ can be estimated via maximum likelihood \cite{brownstone1989efficient}. Yet, the exact implementation has to be done carefully, see e.\,g. \cite{heiss2002structural} and \cite{hensher2005applied} for a detailed discussion. 
Estimation of suppliers' adjustment cost parameters $\Gamma_k$  is a matter of current research. 
Better estimation of $\mu_{j\ell}$ for $ \ell=1,\ldots,L_j$ and $j=1,\ldots,J$ as well as $\Gamma_k$ for $k=1,\ldots,K$ leads to a tighter upper bound for the price adjustments.

{
\section{Conclusion}
We conclude that imperfect behaviour of consumers and producers facilitates to iteratively outbalance demand and supply. From the technical point of view, it is based on the property of strong smoothness of the expected maximum utility -- on the consumers' side. Such smoothness stems from the strong convexity of the corresponding conjugate function. As we have shown, this property holds, in particular, for nested logit. It appears that any error distribution for which the conjugate of
the surplus function is strongly convex will do as well. 
To estimate the modulus of smoothness for the surplus function with respect to given general randomness is a matter of current research. Another issue, worth to be mention, concerns the Walrasian auctioneer who updates prices. Previously in \cite{nesterov2017distributed}, we introduced different techniques for price decentralization, such as trade and auction. According to the latter, either producers suggest prices and consumers
choose the lowest, or consumers suggest prices and producers choose the highest. Both strategies successively lead to equilibrium prices. Unfortunately, these price designs introduce nonsmoothness into the total revenue, so that the acceleration in convergence rate up to one order gets lost. We plan to address this obstacle in the next paper. 
}

\section*{Appendix}
For the sake of completeness, we give a  proof of Proposition \ref{prop:corr}.
\begin{proof}
   The choice probability of  any $ i \in N_\ell $ can be written as a product of two logit choice probabilities 
		\begin{equation} \label{choice} 
	\mathbb{P}^{(i)} = \mathbb{P}^{(i|N_\ell)} \cdot \mathbb{P}^{(N_\ell)}. 
	\end{equation}
		The first term in \eqref{choice} denotes the probability to choose alternative $i $ conditional on nest $N_\ell $ has been chosen. This can be regarded as a second stage decision. The remaining term $ \mathbb{P}^{(N_\ell)} $ gives the probability to choose nest $N_\ell $ among all nests, hence the first stage decision.  Equation \eqref{choice} implies independence of the two logits. Hence there must be random variables $\epsilon_{N_\ell}^{(i)}, \; i \in N_\ell, \; \ell=1, \ldots, L $ and $\epsilon^{(N_\ell)}, \; \ell=1, \ldots, L, $ such that the overall utility of every alternative $i \in N_\ell $  can be written as
 
 \begin{equation}\label{utility}
	v^{(i)} = v^{(i)} + \epsilon_{N_\ell}^{(i)} + v^{(N_\ell)} + \epsilon^{(N_\ell)}.  \end{equation}
	Due to \eqref{choice}, the alternative specific error terms $\epsilon_{N_\ell}^{(i)} $ are independent on the nest error terms  $\epsilon^{(N_\ell)}$.  Obviously, the second stage decision only depends on the alternative specific terms, consequently the $\epsilon_{N_\ell}^{(i)}$'s are iid Gumbel distributed with scale parameters $\mu_\ell$, $\ell=1, \ldots, L. $  Since the first stage decision takes a logit form, the nest specific error terms have to follow a distribution such that the random variable $ \underset{i \in N_\ell} {\max} \, v^{(i)} + \epsilon_{N_\ell}^{(i)} + \epsilon^{(N_\ell)} $ is Gumbel with the scale parameter one.
	Independence of the error terms $  \epsilon_{N_\ell}^{(i)} + \epsilon^{(N_\ell)} $ gives 
	\[Var( \epsilon_{N_\ell}^{(i)} + \epsilon^{(N_\ell)}) =Var( \epsilon_{N_\ell}^{(i)}) + Var( \epsilon^{N_\ell}).
	\] 
	Together with 
	\[
	  Var( \epsilon_{N_\ell}^{(i)} + \epsilon^{(N_\ell)}) = \frac{\pi^2}{6},
	\]
	it follows that \[ Var( \epsilon^{N_\ell}) = \frac{\pi^2}{6} - \frac{\mu_\ell^2 \cdot \pi^2}{6}.\] 
	Simple calculation gives for $i,j \in N_\ell: $ \[ Cov(v^{(i)}, v^{(j)}) = Cov(\epsilon^{(N_\ell)}, \epsilon^{(N_\ell)}) = Var(\epsilon^{(N_\ell)}) = \frac{\pi^2}{6}\cdot(1 - \mu_\ell ^2). \] Due to $Var(v^{(i)}) =  Var( \epsilon_{N_\ell}^{(i)}) + Var( \epsilon^{N_\ell}) $, the proposition holds. 
\end{proof}
 

\end{document}